\theoremstyle{plain}
  \newtheorem{teo}{Theorem}
  \newtheorem{lema}{Lemma}
  \newtheorem{prop}{Proposition}
\theoremstyle{definition}
\theoremstyle{remark}
  \newtheorem{remark}{Remark}
\title{On the explanatory power of principal components}
\author{Daniel A. D\'{\i}az--Pach\'on$^1$\thanks{Ddiaz3@med.miami.edu}, J. Sunil Rao$^1$\thanks{jrao@biostat.med.miami.edu} and Jean-Eudes Dazard$^2$\thanks{jxd101@case.edu}\\
$^1$ University of Miami, Miami, FL, USA\\
$^2$ Case Western Reserve University, Cleveland, OH, USA\\
}
\date{\today}
\begin{document}
\maketitle

\begin{abstract}
We show that if we have an orthogonal base ($u_1,\ldots,u_p$) in a $p$-dimensional vector space, and select $p+1$ vectors $v_1,\ldots, v_p$ and $w$ such that the vectors traverse the origin, then the probability of $w$ being to closer to all the vectors in the base than to $v_1,\ldots, v_p$ is at least 1/2 and converges as $p$ increases to infinity to a normal distribution on the interval [-1,1]; i.e., $\Phi(1)-\Phi(-1)\approx0.6826$. This result has relevant consequences for Principal Components Analysis in the context of regression and other learning settings, if we take the orthogonal base as the direction of the principal components.
\end{abstract}

\section{Introduction}\label{Intro}

Principal Components Analysis (PCA) has been popular since its inception due to its two main features: 1. Explaining well the variation of the information and 2. Because of the first one, being able to reduce the space to those variables that explain most of the variation. In the context of regression, PCA has been used to explain the response in terms of the Principal Components (PC) of the original input variables (see for instance, \cite{Kendall1957},\ p. 75; \cite{Hocking1976}; \cite{MostellerTuckey1977},\ p. 307; \cite{Scott1992}). However, because one of the main features of PCA is dimension reduction (feature 2 mentioned above) it is not always clear that PCA can be used and to still get that the response is well explained by the reduced space of the main PC (see for instance \cite{Cox1968},\ p. 72; \cite{Joliffe1982}; \cite{HadiLing1998}). In fact, as early as 1957, in one of the first references to PCA in regression, Hotelling was doubtful of the method because of this same reason (\cite{Hotelling1957}). For a historical review of PCA in regression settings see \cite{Cook2007}.

Note the interesting fact that the earliest of our references advocating the use of PCA in regression (\cite{Kendall1957}) was proposed in the same year that its first criticism was known (\cite{Hotelling1957}). The questionings of PCA in regression seem to be as old as the method itself. Now, the method and its questionings transcend regression going further to more general learning settings. But whether regression or any other learning setting, the two interesting questions to ask in order to determine the usefulness of the method are the following: 1. Is it the case that (most of the times) the response variable is better explained by the PC than by the original predictors? And, if the answer to this question is yes, then 2. Are the leading PC more able to explain the response than the rest of the PC?

Question 2, as mentioned above, was posed from the beginning in order to determine the usefulness of PCA in learning settings. And this question has been answered recently by Artemiou and Li (\cite{ArtemiouLi2009}). They proved that if there are $p$ PC, and $1\leq i\leq j\leq p$, then the probability of the response being closer to the $i$-th PC than to the $j$-th PC is higher than the probability of the complementary event ---the response being closer to the $j$-th PC than to the $i$-th PC. However, Question 2 only is important if Question 1 has positive answer, because if the original input variables explain better the answer than the PC, then a PCA is not justified. Therefore, we focus our interest here in Question 1. The aim here is to propose an explanation for the answer to this question showing, in general, that the probability of all the PC explaining better (than all the original variables) a response variable is high even in low dimensions, and increases exponentially as $p$ grows to infinite.

The main result of this article is Theorem \ref{TeoPCs}. It answers partially Question 1 on why PC usually do a better job explaining a response than the original input variables. Theorem \ref{TeoPCs} is in fact a particular version of Theorem \ref{TeoLines}, a geometric result that establishes the following: let us assume we have a given orthogonal base of a $p$-dimensional vectorial space, assume also that we select according to a uniform distribution a $p$-dimensional base for the same space (so it does not have two orthogonal vectors a.s., and it generates the space a.s.). Then, if we select a new vector uniformly at random, this new vector is going to be closer (in terms of angular distance) to the orthogonal base than to the random base with probability no less than 1/2 for every $p$ and this probability converges approximately to 0.6826 as $p\rightarrow\infty$.

\section{Setting}

Consider the space $\mathbb R^p$ with  orthogonal base $(u_1,\ldots,u_p)$. Take $\Omega$ to be the space of all unidimensional lines traversing the origin. Select  from $\Omega$ uniformly and independently a set of $p+1$ lines $v_1,\ldots,v_p$ and $w$. Note that $v_1,\ldots,v_p$ form almost surely a basis for $\mathbb R^p$. Here \textit{uniformly} means that we have a unit closed ball centered around the origin $B[0,1]$ with surface $\tilde B$. Then, calling $\mathcal L_p$ the Lebesgue measure in $p$ dimensions, for every $A\subset\tilde B$, we have that the probability of an element in $\Omega$ to traverse $A$ is given by

\begin{align}\label{Unif}
	\frac{\mathcal L_{p-1}(A\cup-A)}{\mathcal L_{p-1}\tilde B},
\end{align}
where $-A=\{x\in\tilde B:-x\in A\}$.

Let $\tilde\phi_i$ be the angle between $w$ and $u_i$, and define $\phi_i:=\min\left\{\left|\tilde\phi_i\right|,\left|\pi-\tilde\phi_i\right|\right\}$.  Let $\tilde\varphi_i$ be the angle between $w$ and $v_i$, and define $\varphi_i := \min\left\{\left|\tilde\varphi_i\right|,\left|\pi-\tilde\varphi_i\right|\right\}$. Let $\tilde\theta_{ij}$ be the angle between $w_j$ and $v_i$, and define $\theta_{ij} := \min\left\{\left|\tilde\theta_{ij}\right|,\left|\pi-\tilde\theta_{ij}\right|\right\}$.

Call $\mathcal O_w$ the orthant (hyper-octant) where $w$ is located in, and, analogously, $\mathcal O_{u_i}$ and $\mathcal O_{v_j}$ the orthants where $u_i$ and $v_j$ live in, respectively. For convenience, let us refer to the orthants by the combination of signs they correspond to (i.e., when $p=2$ the four quadrants can be denoted by $(+,+),(+,-),$ $(-,+),(-,-)$).

Figure \ref{fig1} gives a geometric representation of expectations of uniformly and independently chosen random variables ($v_1$, $v_2$) for $p=2$. Figure \ref{fig2} gives a geometric representation of realizations of uniformly and independently chosen random variables ($v_1$, $v_2$) and $w$ for $p=2$ illustrating how which set, ($v_1$, $v_2$) or ($u_1$, $u_2$), explains better the r.v.\ $w$ in terms of angular distance.
\begin{figure}[!hbt]
    \centering
    \includegraphics[width=1.0\textwidth]{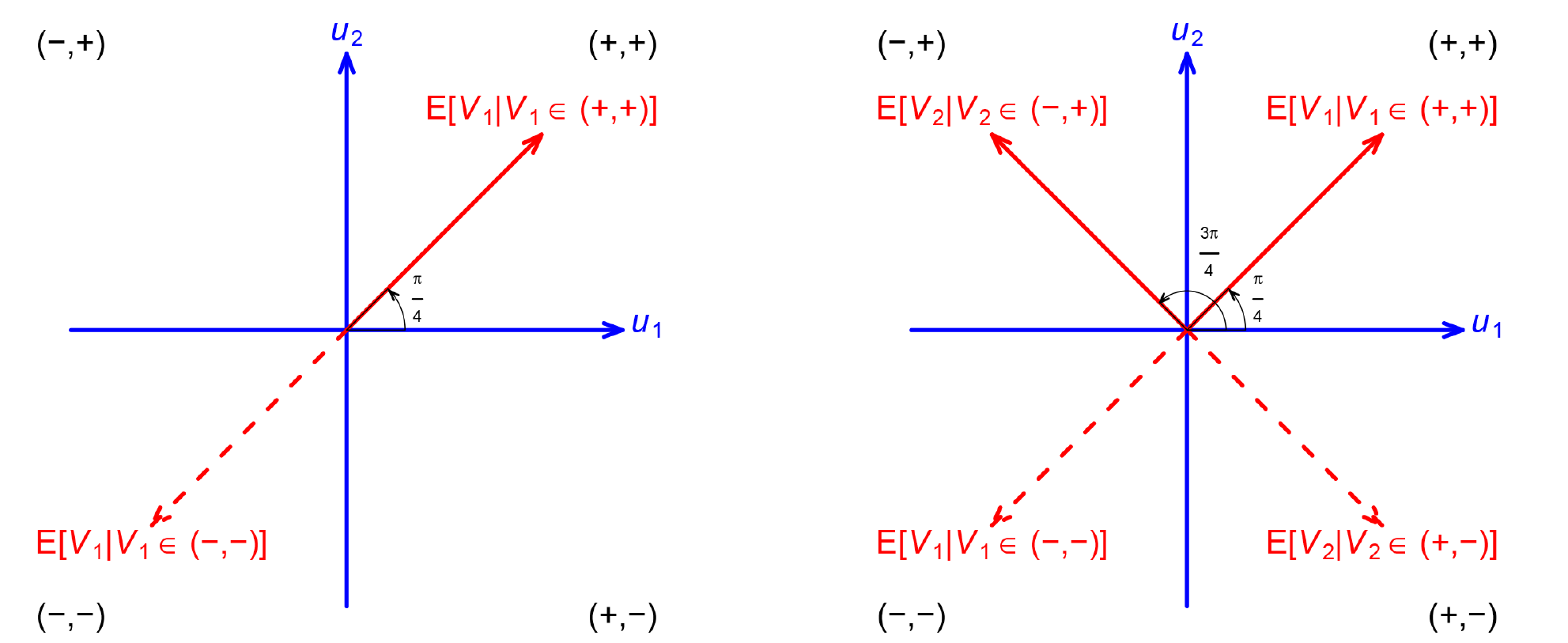}
    \caption{Geometric representation for $p=2$ of the conditional expectation of $v_1$ and $v_2$ (red vectors) given the quadrant in which they are living. Blue vectors represent $u_1$ and $u_2$.}\label{fig1}
\end{figure}

\begin{figure}[!hbt]
    \centering
    \includegraphics[width=1.0\textwidth]{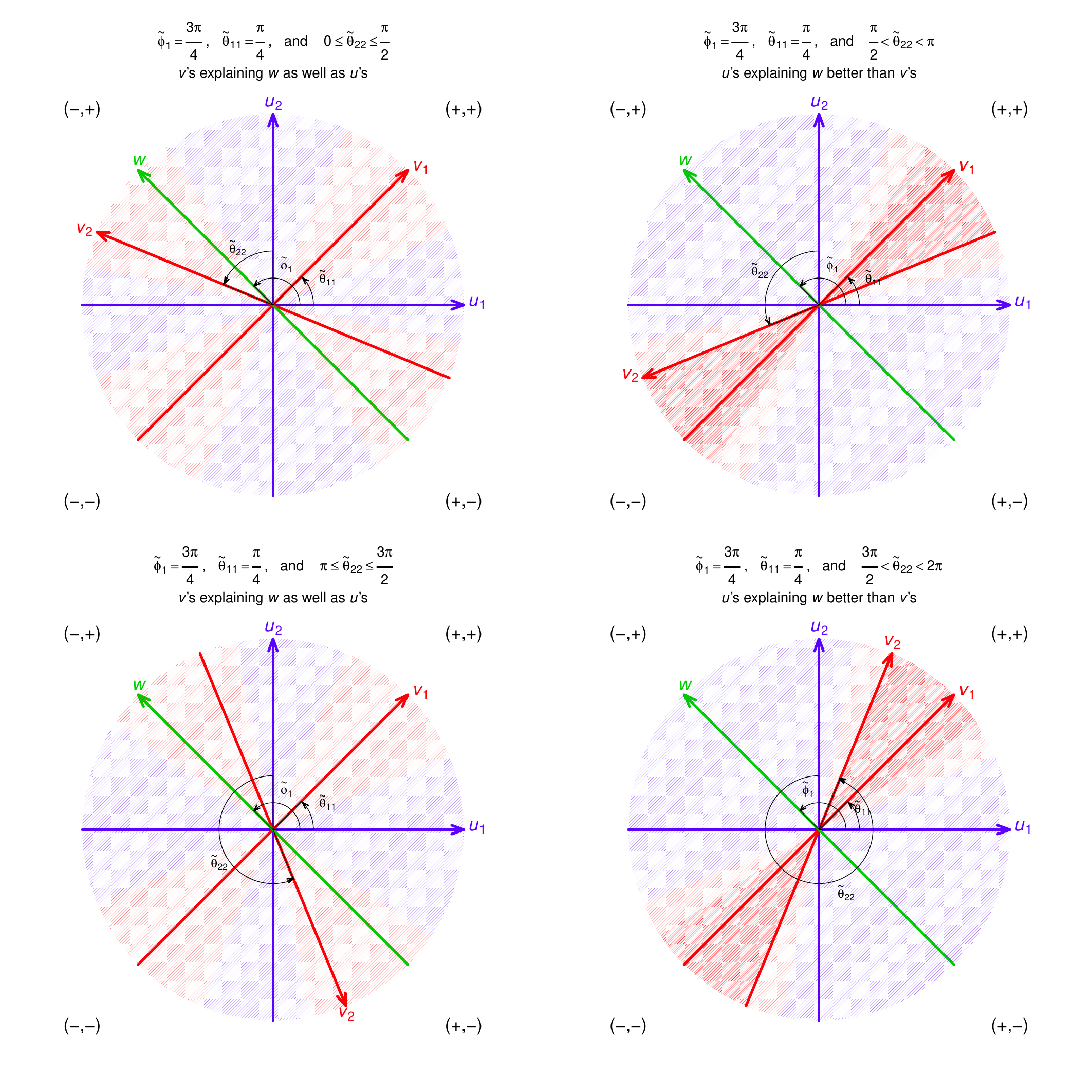}
    \vskip -10pt
    \caption{Geometric representation for $p=2$ of $v_1,v_2$ and $w$. Blue vectors represent $u_1$ and $u_2$, red vectors represent realizations of $v_1$ and $v_2$, and the green vector represents $w$. Areas of circle sectors of $u$ explaining $w$ better than $v$ are shaded in blue, and sectors of $v$ explaining $w$ better than $u$ are shaded in red. The overlapping sectors of variables $v$ appear in darker red. Notice that when ``$u$ explains $w$ better than $v$'', then the red shaded regions of $v_1$ and $v_2$ overlap. Also, notice that the case of ``$u$ explaining $w$ as well as $v$'' includes the situation where ($v_1$, $v_2$) are mutually orthogonal. In all plots, illustrations are for an arbitrary realization of $w : \tilde\phi_{1} = \widehat{(u_1,w)} = \frac{3\pi}{4}$, as well as $v_1 : \tilde\theta_{11} = \widehat{(u_1,v_1)} = \frac{\pi}{4}$, and any realizations of $v_2 : \tilde\theta_{22} = \widehat{(u_2,v_2)}$ in the indicated intervals.}\label{fig2}
\end{figure}
\vskip 0pt

We prove the following result:
\begin{teo}\label{TeoLines}
    \begin{enumerate}
        \item For $k$ fixed and $i\in\{1,\ldots,p\}$, the probability of $w$ being closer to $u_k$ than to $v_i$ is bigger than or equal 1/2.
        \item For $k$ fixed and $i\in\{1,\ldots,p\}$, the probability of $w$ being closer to $u_k$ than to $v_i$ converges to $\Phi(1)-\Phi(-1)\approx 0.6826$.
    \end{enumerate}
\end{teo}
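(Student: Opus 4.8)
The plan is to use the rotational invariance of the construction to reduce the comparison of the two random lines to a single scalar event, and then to recognize that event as one governed by a Student $t$ (equivalently, an $F$) distribution. First I would fix $k$ and, using invariance under the orthogonal group, take $(u_1,\dots,u_p)$ to be the canonical basis; writing $\hat w$ for a unit vector along the line $w$, this gives $\cos^2\phi_k=\langle\hat w,e_k\rangle^2=\hat w_k^2$. Conditioning on $\hat w$ and using that $\hat v_i$ is uniform on the sphere independently of $w$, the inner product $\langle\hat w,\hat v_i\rangle$ has the law of one coordinate of a uniform point on $S^{p-1}$, so $\cos^2\varphi_i\sim\mathrm{Beta}\!\left(\tfrac12,\tfrac{p-1}{2}\right)$ with mean $1/p$, and the same holds for $\cos^2\phi_k$. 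The core step is then to rewrite the event ``$w$ is closer to $u_k$ than to $v_i$'' --- after performing the comparison conditionally and integrating out the auxiliary randomness, including the orthant bookkeeping in Figures~\ref{fig1}--\ref{fig2} --- as the comparison of such a $\mathrm{Beta}\!\left(\tfrac12,\tfrac{p-1}{2}\right)$ variable $B$ with its mean $1/p$. Since $\tfrac{(p-1)B}{1-B}\sim F_{1,p-1}=t_{p-1}^{2}$ and $B\le 1/p\iff\tfrac{(p-1)B}{1-B}\le 1$, the probability in question equals $\Pr\!\left[\,|t_{p-1}|\le 1\,\right]$.

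From here both assertions are quick. For part 1, $\Pr[\,|t_{p-1}|\le 1\,]\ge\tfrac12$ is equivalent to the $0.75$-quantile of $t_{p-1}$ being at most $1$: for $p=2$ the distribution is Cauchy, whose $0.75$-quantile is $\tan(\pi/4)=1$, so equality holds; and for $p\ge 3$ the upper quantiles of $t_m$ strictly decrease as the degrees of freedom $m$ increase, so this quantile drops below $1$ and the probability strictly exceeds $\tfrac12$. For part 2, I would invoke $t_{p-1}\Rightarrow N(0,1)$, which gives $\Pr[\,|t_{p-1}|\le 1\,]\to\Pr[\,|N(0,1)|\le 1\,]=\Phi(1)-\Phi(-1)\approx 0.6826$; together these show the probability rises monotonically from $\tfrac12$ at $p=2$ toward $0.6826$, as announced in the introduction.

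The step I expect to be the genuine obstacle is the reduction sketched in the first paragraph: proving rigorously that the geometric event collapses, after conditioning and integration, exactly to the normalized comparison $B\le 1/p$. This is where the orthogonality of the $u_i$ --- equivalently, the fact that the reference squared cosine is the common mean $1/p$ --- must be used, and where the low-dimensional geometry depicted in the figures has to be distilled into the clean one-parameter statement. Once the event is in that form, the passage to $\{|t_{p-1}|\le 1\}$, the quantile monotonicity behind part 1, and the $t$-to-normal limit behind part 2 are all routine.
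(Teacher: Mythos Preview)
Your route is genuinely different from the paper's, and the step you flag as the ``genuine obstacle'' is not merely unproved --- it is false as stated. The paper never works with the continuous Beta or $t$ laws. Instead it conditions on the orthants $\mathcal O_w$ and $\mathcal O_{v_i}$ containing $w$ and $v_i$, replaces each line by the conditional expectation $(1,\dots,1)p^{-1/2}$ (the unit vector at the center of the orthant), and compares angles for those centers. The comparison then depends only on the number $j$ of sign disagreements between the two orthants, yielding the explicit binomial sum~(\ref{dif11}); Part~1 is extracted from combinatorial monotonicity properties of that sum (Lemmas~\ref{SimpleProp} and~\ref{IncreasingSubseq}), and Part~2 from the de Moivre--Laplace CLT applied to a $\mathrm{Bin}(p,1/2)$ count of sign changes, which gives $\mathbf P[Y_p<(p-\sqrt p)/2]\to\Phi(-1)$.

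Your distributional setup is correct as far as it goes: both $\cos^2\phi_k$ and $\cos^2\varphi_i$ are $\mathrm{Beta}\!\left(\tfrac12,\tfrac{p-1}{2}\right)$ with mean $1/p$. But push your own argument one step further: the conditional law of $\cos^2\varphi_i=\langle\hat w,\hat v_i\rangle^2$ given $\hat w$ does not depend on $\hat w$, so $\cos^2\varphi_i$ is \emph{independent} of $\cos^2\phi_k=\hat w_k^{\,2}$. The literal event $\{\cos^2\phi_k\ge\cos^2\varphi_i\}$ therefore compares two i.i.d.\ continuous variables and has probability exactly $1/2$ for every $p$; there is no conditioning-and-integration that collapses it to $\{B\le 1/p\}$. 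The nontrivial limit $\Phi(1)-\Phi(-1)$ in the paper enters precisely through the orthant coarse-graining, not through the raw angles. Your surrogate $\mathbf P[\,|t_{p-1}|\le 1\,]$ happens to share the value $1/2$ at $p=2$ and the limit $\Phi(1)-\Phi(-1)$, but it is a different finite-$p$ object --- for $p=3$ it equals $1/\sqrt3\approx 0.577$, whereas (\ref{dif11}) gives $3/4$ --- and your monotonicity claim does not describe the paper's quantity, which the paper itself notes is not monotone. So the $t$-distribution shortcut neither proves the statement nor reproduces the paper's formula.
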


\section{Proof of Theorem \ref{TeoLines}}

Call $E_w$ the event ``$w$ is located in the orthant $\mathcal O_w$'' and, analogously, let $E_{v_i}$ as the event ``$v_i$ is living in the orthant $\mathcal O_{v_i}$''. Because of (\ref{Unif}) these events have each probability $2^{1-p}$. Assume without loss of generality that $\mathcal O_w$ is $(+,\,\cdots,+)$. Then
\begin{align*}
	\textbf E[w|E_w]=(1,\ldots,1)p^{-1/2},
\end{align*}
and from this we obtain
\begin{align}\label{cosw}
	\textbf E[\cos\phi_k|E_w]=p^{-1/2},
\end{align}
for $k=1,\ldots,p$. Analogously to (\ref{cosw}), we have that
\begin{align*}
	\textbf E[\cos\theta_{ik}|E_{v_i}]=p^{-1/2},
\end{align*}
for $i,k=1,\ldots,p$. Now, for $0<j\leq p/2$, let $j$ be the number of different signs between $\mathcal O_w$ and $\mathcal O_{v_i}$. Then we obtain that
\begin{align}\label{cosZW}
	\textbf E\left[\cos\varphi_i|E_w,E_{v_i}\right]=1-2jp^{-1},
\end{align}
and, from (\ref{cosw}) and (\ref{cosZW}),
\begin{align}\label{cosdifs}
	\textbf E\left[\cos\phi_k-\cos\varphi_i|E_w,E_{v_i}\right]=p^{-1/2}+2jp^{-1}-1,
\end{align}
so we get that (\ref{cosdifs}) is  nonnegative  iff $\left\lceil\frac{p-p^{1/2}}{2}\right\rceil\leq j\leq\lfloor p/2\rfloor$. Call $p^*:=\left\lceil\frac{p-p^{1/2}}{2}\right\rceil$. Then, for $G_{ik}^+:=\{\cos\phi_k-\cos\varphi_i\geq0\}$ we get that,
\begin{align}
	\textbf 1\left\{G_{ik}^+|E_w,E_{v_i}\right\}=
	\begin{cases}
		1 \text{\ \ \ \ \ \ if } p^*\leq j\leq\lfloor d/2\rfloor \\
		0 \text{\ \ \ \ \ \ if } 0\leq j\leq p^*-1,
	\end{cases}
\end{align}
therefore
\begin{align}\label{dif11}
	\textbf E\textbf 1\left\{G_{ik}^+|E_w, E_{v_i}\right\}=
	\begin{cases}
		2^{1-p}\sum_{j=p^*}^{\lfloor p/2\rfloor}\binom{p}{j} & \text{ if $p$ is odd}, \\
		2^{1-p}\left(\sum_{j=p^*}^{p/2-1}\binom{p}{j}+\frac{\binom{p}{p/2}}{2}\right) & \text{ if $p$ is even}.
	\end{cases}
\end{align}

As a visualization of the rate of convergence, we show in Figure \ref{fig3} the corresponding convergence plot of $\textbf E\textbf 1\left\{G_{ik}^+|E_w, E_{v_i}\right\}$ as a function of $p$.

\begin{figure}[!hbt]
    \centering
    \includegraphics[width=1.0\textwidth]{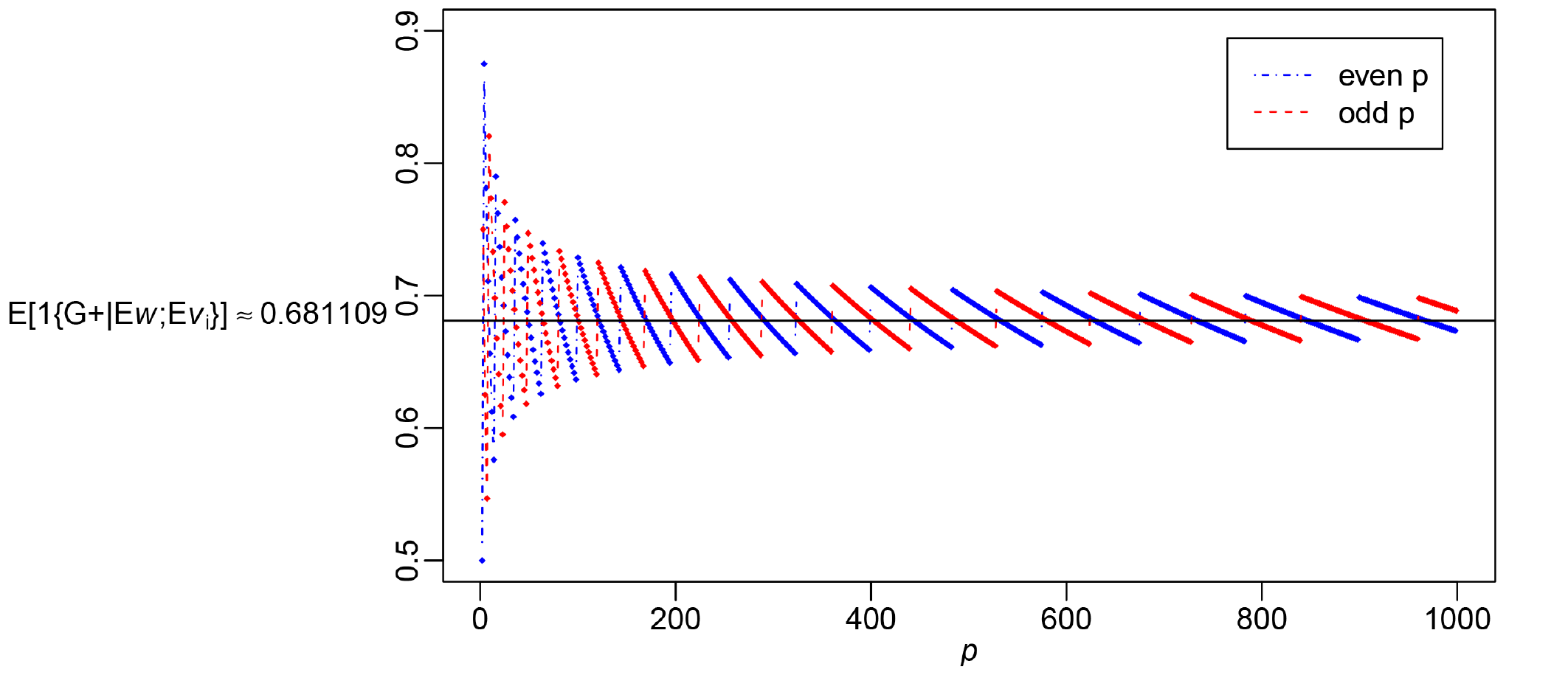}
    \vskip -10pt
    \caption{Convergence plot of $\textbf E\textbf 1\left\{G_{ik}^+|E_w, E_{v_i}\right\}$ for $p\in \{2,\dots,993\}$.}\label{fig3}
\end{figure}

Now, by (\ref{Unif}), total probability and independence,
\begin{align}\label{ProbG}
	\textbf P\left[G_{ik}^+\right]=\textbf E\textbf 1\left\{G_{ik}^+|E_w, E_{v_i}\right\}.
\end{align}

\begin{prop}\label{CondExp}
	$\textbf P\left[G_{ik}^+\right]\geq1/2$, for all $p\geq2$.
\end{prop}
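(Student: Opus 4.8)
The plan is to turn the closed form supplied by (\ref{ProbG})--(\ref{dif11}) into a single binomial--tail inequality. First I would use $\binom{p}{j}=\binom{p}{p-j}$, together with $\sum_{j=0}^{\lfloor p/2\rfloor}\binom{p}{j}=2^{p-1}$ when $p$ is odd and $\sum_{j=0}^{p/2-1}\binom{p}{j}=2^{p-1}-\tfrac12\binom{p}{p/2}$ when $p$ is even, to collapse both branches of (\ref{dif11}) to the single formula
\begin{align*}
	\textbf P\left[G_{ik}^+\right]=1-2^{1-p}\sum_{j=0}^{p^*-1}\binom{p}{j}.
\end{align*}
Hence $\textbf P[G_{ik}^+]\ge 1/2$ is equivalent to $\sum_{j=0}^{p^*-1}\binom{p}{j}\le 2^{p-2}$, i.e.\ to $\textbf P(X\le p^*-1)\le 1/4$ for $X\sim\mathrm{Bin}(p,1/2)$; writing $D:=2X-p$ for the associated walk of $p$ independent signs, it is equivalent to $\textbf P(D^2>p)\le 1/2$, that is $\textbf P(|D|>\sqrt p)\le 1/2$. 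This is the conceptual reformulation: up to the tie convention already built into (\ref{dif11}), the complementary event ``$w$ strictly closer to $v_i$ than to $u_k$'' is exactly $\{|D|>\sqrt p\}$, the event that $|D|$ exceeds its own standard deviation $\sqrt p$.

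The point is that $p^*-1$ sits just below $\tfrac p2-\tfrac{\sqrt p}{2}$, one standard deviation below the mean of $X$, so $\textbf P(X\le p^*-1)\to\Phi(-1)\approx 0.1587$, comfortably under $1/4$ --- but this is the Gaussian regime, not the large--deviation regime, so a Chernoff/Hoeffding estimate is too crude: it only gives $\textbf P(X\le p^*-1)\le e^{-1/2}\approx 0.61$, larger than $1/4$ by nearly a factor of four. Instead I would invoke the Berry--Esseen theorem for the bounded i.i.d.\ signs making up $D$: it yields $\bigl|\textbf P(D/\sqrt p\le x)-\Phi(x)\bigr|\le C/\sqrt p$ with an explicit constant $C$, hence $\textbf P(|D|>\sqrt p)=2\,\textbf P(D>\sqrt p)\le 2\Phi(-1)+2C/\sqrt p$, which drops below $1/2$ for all $p$ past an explicit threshold $p_0$.

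For the finitely many $p$ with $2\le p\le p_0$ the inequality $\sum_{j=0}^{p^*-1}\binom{p}{j}\le 2^{p-2}$ is a direct numerical check --- precisely the quantity tabulated in Figure~\ref{fig3}. If one prefers to avoid Berry--Esseen, the same endgame can be run elementarily: bound the tail by the geometric series it dominates, $\sum_{j=0}^{p^*-1}\binom{p}{j}\le \binom{p}{p^*-1}\cdot\dfrac{p-p^*+2}{p-2p^*+3}$, and estimate the central coefficient $\binom{p}{p^*-1}2^{-p}$ by Stirling's formula; since $p^*-1=\tfrac p2-\tfrac{\sqrt p}{2}+O(1)$ this bounds $2^{1-p}\sum_{j=0}^{p^*-1}\binom{p}{j}$ by a quantity tending to $\sqrt{2/\pi}\,e^{-1/2}\approx 0.484<1/2$.

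The hard part is that everything has a thin margin. The limiting value of $\textbf P[G_{ik}^+]$ is only $\approx 0.6826$, so the Berry--Esseen (or Stirling) error terms cannot be absorbed into vague $O(\cdot)$ near the crossover, and the constant $C$ has to be a good one; moreover the inequality is an equality at $p=2$ (there $p^*=1$, $\sum_{j=0}^{0}\binom{2}{j}=1=2^{0}$, so $\textbf P[G_{ik}^+]=1/2$), so no monotonicity shortcut is available and the split into ``small $p$, checked directly'' and ``large $p$, controlled analytically'' seems forced. A final trap is the ceiling in $p^*=\bigl\lceil\tfrac{p-\sqrt p}{2}\bigr\rceil$: when $p$ is a perfect square, $\tfrac{p-\sqrt p}{2}=\binom{\sqrt p}{2}$ is an integer, so $p^*-1=\tfrac{p-\sqrt p}{2}-1$, and relaxing $p^*-1$ up to $\tfrac{p-\sqrt p}{2}$ would wrongly add the term $\binom{p}{(p-\sqrt p)/2}$ and destroy the bound (e.g.\ at $p=4$ the true $\textbf P(X\le p^*-1)=\tfrac1{16}$, whereas $\textbf P(X\le 1)=\tfrac5{16}>\tfrac14$); the $D$-formulation, in which $\{|D|>\sqrt p\}$ is handled exactly, sidesteps this.
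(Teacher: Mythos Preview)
Your proposal is correct and takes a genuinely different route from the paper's. You collapse the two parity cases of (\ref{dif11}) into the single expression $\textbf P[G_{ik}^+]=1-2^{1-p}\sum_{j<p^*}\binom{p}{j}$, recast the claim as the binomial--tail inequality $\textbf P(X\le p^*-1)\le 1/4$ (equivalently $\textbf P(|D|>\sqrt p)\le 1/2$ for the symmetric walk $D=2X-p$), and then handle large $p$ analytically via Berry--Esseen (or the geometric--series/Stirling bound) and small $p$ by direct evaluation. The paper instead runs a purely combinatorial induction: it sets $D(p)=\textbf P[G_{ik}^+]-\textbf P[(G_{ik}^+)^c]$, partitions $\{p\ge 2\}$ into the level sets $\mathcal L_l=\{p:L(p)=l\}$ of $L(p)=p^*-1$, shows in Lemma~\ref{SimpleProp}(9) that $2^pD(p)$ increases within each $\mathcal L_l$, shows in Lemma~\ref{IncreasingSubseq} (through the recursion $D(p+2)=4D(p)-2\bigl(\binom{p}{p^*}-\binom{p}{p^*-1}\bigr)$) that the minimum over each new $\mathcal L_l$ does not fall below the previous one, and anchors the whole chain at $p=2$.

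What each approach buys: your reformulation is conceptually cleaner --- it makes transparent that Parts~1 and~2 of Theorem~\ref{TeoLines} are two faces of the \emph{same} binomial tail, one a nonasymptotic bound and the other its CLT limit --- and the walk interpretation $\{|D|>\sqrt p\}$ is pleasing. Its cost is exactly what you flag: the margin is thin ($2\Phi(-1)\approx 0.317$ versus the target $1/2$; your Stirling variant lands near $0.484$), so you need a good explicit Berry--Esseen constant and then a finite case check up to the resulting threshold. The paper's induction is more ad hoc and its supporting lemmas are stated tersely, but it is entirely elementary: no external constants, no numerical verification, and it proceeds uniformly in $p$ from the base case upward.
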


Some comments are in order before proceeding. Because of uniformity we are only considering half of the space ($2^{p-1}$ orthants), since for all $\omega\in\Omega$, if $\omega$ lives in a given orthant $\mathcal O_\omega$, then it is also living in $-\mathcal O_\omega$. From this observation we see why all the combinations considered in (\ref{dif11}) are located on the left half of the Pascal triangle for each $p$. Call $T(p)$ the total number of terms in this half of the Pascal triangle; then we have $T(p)=\lfloor p/2\rfloor$. For a given $p$, call $R(p)$ the number of terms being added in (\ref{dif11}), and we have that $R(p)=\lfloor p/2\rfloor-p^*+1$. Call also $L(p)=T(p)-R(p)$. Finally, let $D(p)$ be $\textbf P\left[G_{ik}^+\right]-\textbf P\left[\left(G_{ik}^+\right)^c\right]$, then the statement in Proposition \ref{CondExp} is equivalent to prove that $D(p)\geq0$ and it is enough to look at the numerators of these probabilities.

\begin{lema}\label{SimpleProp} $L(p)$, $C(p)$, $T(p)$ and $D(p)$ satisfy the following properties:
    \begin{enumerate}
        \item[(1)] $T(p)=p/2$ for all even $p$.
        \item[(2)] $T(p+1)=T(p)$ for all even $p$.
        \item[(3)] $T(p+2)=T(p)+1$ for all $p$.
        \item[(4)] $L$ is increasing and either $L(p+1)\in\{L(p),L(p)+1\}$.
        \item[(5)] $R(p+1)\in\{R(p)-1,R(p),R(p)+1\}$
        \item[(6)] $R(p+2)\in\{R(p),R(p)+1\}$.
        \item[(7)] $L(p+2)\in\{L(p),L(p)+1\}$.
        \item[(8)] Let $p\geq2$ and define $\mathcal L_l:=\{p\in\mathbb N:L(p)=l\}$. Then $\#\mathcal L_l$, the cardinality of $\mathcal L_l$, is at least 2 and at most 3 for all $p\geq2$.
        \item[(9)] For $p,p'\in\mathcal L_l$ and $p<p'$, we have that $2^pD(p)<2^{p'}D(p')$.
    \end{enumerate}
\end{lema}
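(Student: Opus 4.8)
The plan is to treat the nine items in groups, all of them built on the function $g(p):=\frac{p-p^{1/2}}{2}$, for which $p^{*}=\lceil g(p)\rceil$, $L(p)=\lceil g(p)\rceil-1$, $T(p)=\lfloor p/2\rfloor$, and $R(p)=T(p)-L(p)$. Items (1)--(3) are then immediate from elementary floor identities: $\lfloor p/2\rfloor=p/2$ for even $p$; $\lfloor(p+1)/2\rfloor=\lfloor p/2\rfloor$ for even $p$; and $\lfloor(p+2)/2\rfloor=\lfloor p/2\rfloor+1$ for every $p$.

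For items (4) and (7) I would control the increments of $L$ through those of $g$. Since $g'(p)=\frac12-\frac1{4\sqrt p}>0$ for $p\geq1$, $g$ is strictly increasing, so $L$ is non-decreasing. Using $\sqrt{p+c}-\sqrt p=\frac{c}{\sqrt{p+c}+\sqrt p}$ one checks that $0<\sqrt{p+1}-\sqrt p<1$ and $0<\sqrt{p+2}-\sqrt p<1$ for all $p\geq1$, whence $g(p+1)-g(p)=\frac12\bigl(1-(\sqrt{p+1}-\sqrt p)\bigr)\in(0,1)$ and $g(p+2)-g(p)=\frac12\bigl(2-(\sqrt{p+2}-\sqrt p)\bigr)\in(0,1)$; and $0<g(q)-g(p)<1$ forces $\lceil g(q)\rceil-\lceil g(p)\rceil\in\{0,1\}$, giving (4) and (7). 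Items (5) and (6) follow algebraically: $R(p+1)-R(p)=\bigl(T(p+1)-T(p)\bigr)-\bigl(L(p+1)-L(p)\bigr)$ is a difference of two integers in $\{0,1\}$, hence lies in $\{-1,0,1\}$, and $R(p+2)-R(p)=\bigl(T(p+2)-T(p)\bigr)-\bigl(L(p+2)-L(p)\bigr)=1-\bigl(L(p+2)-L(p)\bigr)\in\{0,1\}$ by (3) and (7).

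For item (8), $L$ being non-decreasing with increments in $\{0,1\}$ makes each $\mathcal L_l$ a block of consecutive integers, so only its length needs bounding. For $\#\mathcal L_l\geq2$: a direct computation gives $\mathcal L_0=\{2,3,4\}$; for $l\geq1$, with $a=\min\mathcal L_l\geq5$, the absence of jumps forces $L(a-1)=l-1$, and then item (7) at $p=a-1$ gives $L(a+1)\leq L(a-1)+1=l$, so (since also $L(a+1)\geq L(a)=l$) $a+1\in\mathcal L_l$. For $\#\mathcal L_l\leq3$: from $\sqrt{p+3}-\sqrt p=\frac3{\sqrt{p+3}+\sqrt p}\leq\frac3{\sqrt5+\sqrt2}<0.83$ for $p\geq2$ one gets $g(p+3)-g(p)>1$, hence $L(p+3)\geq L(p)+1$ for all $p\geq2$, which rules out four consecutive members of any $\mathcal L_l$.

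Item (9) is the substantive one. First I would rewrite $2^{p}D(p)$: combining (\ref{dif11}), (\ref{ProbG}) and $\textbf P\bigl[(G_{ik}^{+})^{c}\bigr]=1-\textbf P\bigl[G_{ik}^{+}\bigr]$ (and accounting for the central term when $p$ is even) gives $2^{p}D(p)=2^{p}-4\sum_{j=0}^{L(p)}\binom{p}{j}$. Since $\mathcal L_l$ is a block, it suffices, by chaining, to prove $2^{p}D(p)<2^{p+1}D(p+1)$ whenever $p$ and $p+1$ both lie in $\mathcal L_l$; by Pascal's rule $\binom{p+1}{j}=\binom{p}{j}+\binom{p}{j-1}$ this difference is exactly $2^{p}-4\sum_{j=0}^{L(p)-1}\binom{p}{j}$, so the claim reduces to the tail bound
\[
	\sum_{j=0}^{L(p)-1}\binom{p}{j}<2^{p-2},
	\qquad\text{i.e.}\qquad
	\textbf P\!\left[\mathrm{Bin}\!\left(p,\tfrac12\right)\leq L(p)-1\right]<\tfrac14 .
\]
For $p\in\{2,3,4\}$ this is trivial, since $L(p)=0$ and the sum is empty; for $p\geq5$ one has $L(p)-1\leq g(p)-1=\frac p2-\bigl(\frac{\sqrt p}{2}+1\bigr)$, so the left side is at most $\textbf P\!\left[\mathrm{Bin}\!\left(p,\tfrac12\right)\leq\frac p2-\bigl(\frac{\sqrt p}{2}+1\bigr)\right]$, whose standardized cutoff is $-1-2/\sqrt p<-1$, so by the central limit theorem it is close to $\Phi(-1)\approx0.1587<\tfrac14$. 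The main obstacle is making this uniform in $p$: because $\Phi(-1)$ is not comfortably below $\tfrac14$, crude bounds (Hoeffding, or even $\tfrac12e^{-2t^{2}/p}$) are too weak, and one needs a quantitative normal approximation --- e.g.\ a Berry--Esseen bound for $\mathrm{Bin}(p,\tfrac12)$, of the form $\textbf P\!\left[\mathrm{Bin}(p,\tfrac12)\leq\frac p2-t\right]\leq\Phi(-2t/\sqrt p)+0.4748/\sqrt p$, which is strictly below $\tfrac14$ for all $p$ past a modest threshold, the finitely many remaining $p$ being checked directly (where the slack is in fact much larger).
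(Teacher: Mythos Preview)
Your argument is correct and considerably more detailed than the paper's own proof, which merely states that (1)--(3) follow from the definitions, that (4) follows because $\{p^{*}\}$ is increasing with $(p+1)^{*}-p^{*}\le 1$, and that (5)--(9) ``can be easily deducted from the former properties.'' Your treatment of (1)--(8) via the increments of $g(p)=(p-\sqrt p)/2$ is exactly the kind of computation the paper is gesturing at, carried out in full.

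For (9) you go well beyond the paper. One observation worth recording: from your own two formulas, subtracting $2^{p}D(p)=2^{p}-4\sum_{j=0}^{l}\binom{p}{j}$ from $2^{p+1}D(p+1)-2^{p}D(p)=2^{p}-4\sum_{j=0}^{l-1}\binom{p}{j}$ yields the identity
\[
2^{p+1}D(p+1)-2^{p}D(p)\;=\;2^{p}D(p)+4\binom{p}{l}\qquad\text{for }p,p+1\in\mathcal L_{l},
\]
so the strict inequality in (9) is automatic as soon as $D(p)\ge 0$. Since in the paper item (9) is only ever invoked inside the inductive proof of Proposition~\ref{CondExp} --- where Lemma~\ref{IncreasingSubseq} supplies $D(\min\mathcal L_{l})\ge 0$ and one then propagates forward within $\mathcal L_{l}$ --- this identity is almost certainly the ``easy'' argument the authors have in mind, and it sidesteps any tail estimate. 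Your Berry--Esseen-plus-finite-check route, by contrast, establishes (9) as a standalone statement independent of Proposition~\ref{CondExp}; that is strictly stronger than what the paper proves or needs, and your diagnosis that crude Hoeffding-type bounds are insufficient for this standalone version is accurate.
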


\begin{proof}
(1), (2) and (3) follow from the definitions. (4) follows from the fact that $\{p^*\}$ is an increasing sequence and $(p+1)^*-p^*\leq1$. The following ones can be easily deducted from the former properties.
\end{proof}

Note that $D(p)$ is not monotonically increasing. When $L(p+1)=L(p)+1$, we have that $R(p+1)$ is either $R(p)$ or $R(p)-1$. For such cases, $D(p+1)<D(p)$. However, the next proposition proves that despise this fact, the subsequence $\{\min_{p\in\mathcal L_l}D(p)\}_{l=0}^\infty$ is increasing.

\begin{lema}\label{IncreasingSubseq}
$S(\min\mathcal L_l)>S(\min\mathcal L_l-2)$, for all $l>0$.
\end{lema}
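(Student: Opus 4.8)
The plan is to reduce the statement to a single binomial inequality via one more application of Pascal's rule, and then to bound the two sides separately. Throughout write $p^{*}=p^{*}(p)=\lceil(p-\sqrt p)/2\rceil$. A short computation from (\ref{dif11})--(\ref{ProbG}), using that the left half of Pascal's $p$-th row sums to $2^{p-1}$, gives $\textbf P[(G_{ik}^{+})^{c}]=2^{1-p}\sum_{j=0}^{p^{*}-1}\binom{p}{j}$ (the subtracted terms never reach the central index, so no half‑weight arises), hence, up to the fixed positive normalisation implicit in property (9) of Lemma \ref{SimpleProp},
\[
 S(p)=2^{p-1}-2\sum_{j=0}^{p^{*}-1}\binom{p}{j}.
\]
From $R(p)=\lfloor p/2\rfloor-p^{*}+1$ and $L(p)=T(p)-R(p)$ one gets $p^{*}=L(p)+1$; using that $L$ is non-decreasing with unit steps and that $\#\mathcal L_{l-1}\ge2$ (properties (4) and (8)), both $\min\mathcal L_{l}-1$ and $q:=\min\mathcal L_{l}-2$ lie in $\mathcal L_{l-1}$, so $p^{*}(q)=l$ while $p^{*}(\min\mathcal L_{l})=l+1$. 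Thus the lemma concerns $S$ at the pair $q,\ q+2$ across which $p^{*}$ increases by exactly one.

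For such a pair, inserting $\binom{p+2}{j}=\binom{p}{j}+2\binom{p}{j-1}+\binom{p}{j-2}$ into $\sum_{j=0}^{p^{*}(p)}\binom{p+2}{j}$ and collecting the resulting partial sums yields
\[
 S(p+2)=4\,S(p)+2\!\left(\binom{p}{\,p^{*}(p)-1}-\binom{p}{\,p^{*}(p)}\right)
\]
whenever $p^{*}(p+2)=p^{*}(p)+1$. With $q=\min\mathcal L_{l}-2$ and $l=p^{*}(q)$ this reads $S(\min\mathcal L_{l})-S(q)=3\,S(q)+2\big(\binom{q}{l-1}-\binom{q}{l}\big)$, so the lemma is equivalent to the single inequality
\[
 3\,S(q)>2\!\left(\binom{q}{l}-\binom{q}{l-1}\right),
\]
whose right-hand side is non-negative because $l=\lceil(q-\sqrt q)/2\rceil\le(q+1)/2$.

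It then remains to prove this inequality, which I would do by showing the right side is small and the left side is large. Since $\binom{q}{l}-\binom{q}{l-1}=\binom{q}{l}\cdot\frac{q-2l+1}{q-l+1}$ and $l\ge(q-\sqrt q)/2$ forces $q-2l\le\sqrt q$, the fraction is $O(q^{-1/2})$; together with $\binom{q}{l}\le\binom{q}{\lfloor q/2\rfloor}=O(2^{q}q^{-1/2})$ the right side is $O(2^{q}/q)$. For the left side, $S(q)=2^{q-1}\big(1-4\,\textbf P[\mathrm{Bin}(q,\tfrac12)<p^{*}(q)]\big)$, and since $p^{*}(q)-1<(q-\sqrt q)/2$ sits about one standard deviation below the mean $q/2$, this probability tends to $\Phi(-1)\approx0.1587<\tfrac14$; a Berry--Esseen estimate (or the Gaussian bounds already used for the second part of Theorem \ref{TeoLines}) gives $\textbf P[\mathrm{Bin}(q,\tfrac12)<p^{*}(q)]\le\Phi(-1)+Cq^{-1/2}$, so $S(q)\ge c\,2^{q}$ for some fixed $c>0$ and all $q$ beyond an explicit bound $q_{0}$. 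For those $q$ the inequality holds with a wide margin, and the finitely many $l$ with $\min\mathcal L_{l}-2<q_{0}$ (in particular $l=1,2$, i.e.\ $q\in\{3,5\}$) are settled by direct computation. Together with property (9) and the base value $S(2)=0$, the resulting monotonicity of $l\mapsto S(\min\mathcal L_{l})$ then delivers $S(p)\ge0$ for all $p$, i.e.\ Proposition \ref{CondExp}.

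The step I expect to be the main obstacle is the lower bound $S(q)\ge c\,2^{q}$, i.e.\ placing the truncated binomial mass $\sum_{j<p^{*}(q)}\binom{q}{j}$ below $\tfrac14\cdot2^{q}$ with a margin of order $2^{q}/q$. The elementary large-deviation bounds (Hoeffding, Chernoff) only give $e^{-1/2}\approx0.61$ at this scale, far above $\tfrac14$, so one genuinely needs Gaussian-accuracy control of the cumulative binomial one standard deviation below the mean; matching it against the small gap $\binom{q}{l}-\binom{q}{l-1}$ --- rather than against $\binom{q}{l}$ itself, where the estimate would be too weak at moderate $q$ --- is the delicate part.
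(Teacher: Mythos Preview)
Your argument coincides with the paper's up through the key recursion: both first observe that $p^{*}$ increases by exactly one across the pair $(q,q+2)$ with $q=\min\mathcal L_{l}-2$ (equivalently, $L(\min\mathcal L_{l})=L(\min\mathcal L_{l}-2)+1$, $R(\min\mathcal L_{l})=R(\min\mathcal L_{l}-2)$), and both then use Pascal's rule to obtain $S(p+2)=4S(p)-2\bigl(\binom{p}{p^{*}}-\binom{p}{p^{*}-1}\bigr)$. At that point the paper simply writes ``which proves the Lemma'' and stops; you, by contrast, correctly isolate the residual inequality $3S(q)>2\bigl(\binom{q}{l}-\binom{q}{l-1}\bigr)$ and supply an argument for it.

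The route you take for that residual step is genuinely different in spirit from what the paper's architecture suggests. Your lower bound $S(q)\ge c\,2^{q}$ is exactly the statement $D(q)\ge 2c>0$, a quantitative form of Proposition~\ref{CondExp} itself, established directly via a Berry--Esseen estimate on the binomial tail; once that is in hand, the inductive scaffolding (property~(9), Lemma~\ref{IncreasingSubseq}, base value at $p=2$) that the paper erects to reach Proposition~\ref{CondExp} becomes unnecessary. So your argument is sound but somewhat self-undercutting: you prove the lemma by first proving, by independent analytic means and for all large $q$, the very proposition the lemma is meant to feed. The paper evidently intends a purely combinatorial bootstrap from the base case upward, and its silence after the recursion indicates the authors regard the remaining inequality as immediate---though, as you rightly flag, the recursion alone does not deliver it.
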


\begin{proof}
First, note that for all $l\geq1$, we have that $L(\min\mathcal L_l)=L(\min\mathcal L_l-2)+1$. Thus, $R(\min\mathcal L_l)=R(\min\mathcal L_l-2)$ (see Properties (3), (6) and (7) in Lemma \ref{SimpleProp}). Using this fact and basic combinatorics properties, it is easily seen that $D(p+2)=4D(p)-2\left(\binom{p}{p^*}-\binom{p}{p^*-1}\right)$, which proves the Lemma.
\end{proof}

\begin{proof}[Proof of Proposition \ref{CondExp}]
Lemmas \ref{SimpleProp} and \ref{IncreasingSubseq}, together with the fact that $D(2)=D(\min\mathcal L_0)=1$, prove the result.
\end{proof}
\begin{proof}[Proof of Theorem \ref{TeoLines}]
	Part 1 follows directly from Proposition \ref{CondExp}.
	
	To prove the Part 2, let $Y_p$ be a Bin$(p,1/2)$. By the central limit theorem we have:
	\begin{align*}
		\textbf P\left[Y_p<(p-p^{1/2})/2\right]\rightarrow\Phi(-1),
	\end{align*}
	therefore, by symmetry of the sequence $\binom{p}{0},\binom{p}{1},\ldots,\binom{p}{p}$, we obtain the result.
\end{proof}

As a visualization of the rate of convergence, we show in Figure \ref{fig3} the corresponding convergence plot of (\ref{dif11}) as a function of $p$.

\begin{remark}\label{R8}
Seeing (\ref{ProbG}) from the viewpoint of the unconditioned probability of $G^+_{ik}$ at the left-hand side, we are looking at the event of, given $i$ and $k$, having $u_k$ closer to $w$ than $v_i$. Therefore, given $k$, the r.v.\ $U_k\sim$Bin$(p,\textbf P[G^+_{ik}])$ tells us the probability that $u_k$ explains better $w$ than $U_k$ of the $v$'s. For instance, since $\textbf P[G^+_{ik}]\approx 2\textbf P[(G^+_{ik})^c]$ (indeed, it can be shown that $\textbf P[G^+_{ik}]>2\textbf P[(G^+_{ik})^c]$ for $p\geq 783$), then $\textbf P[U_k=p]\approx 2^p\textbf P[U_k=0]$. That is, the probability of $u_k$ explaining better $w$ than all the $u$'s is exponentially higher than the probability of all the $v$'s explaining better $w$ than $u_k$.
\end{remark}

\begin{remark}\label{R9}
On the other hand, the right-hand side of (\ref{ProbG}), because of the condition on $E_w$, implies that in expectation $w$ is closer to \emph{all} $u_1,\ldots,u_p$ than to $v_i$, since being in ``the middle'' of an orthant means that $w$ is at the same distance of each $u_i$, $i=1,\ldots,p$. Therefore, as in the previous Remark, for large $p$, Theorem \ref{TeoLines} reveals the conditionally expected proportion of $v$'s that do not explain $w$ as well as \emph{all} the $u$'s. The value is around 2/3. The downside of this conditioning is that around 1/3 of the $v$'s are expected to explain better $w$ than all the $u$'s.
\end{remark}

\begin{remark}
Finally, note that having compared the distance from $w$ to $v_i$ and to $u_k$ as we did above, it is difficult to add to the analysis a second orthogonal vector $u_{k'}$, with $k\neq k'$, since the angle between $w$ and $u_{k'}$ is going to be dependent on the angle betwen $w$ and $u_k$.
\end{remark}

\section{Extension to random variables}

One advantage of the approach used in the former section is the relatedness of the cosine similarity and the correlation coefficient of two random variables. The only difference being that the correlation coefficient is invariant to translations, while the cosine similarity is not. From the geometrical point of view, the problem is solved by restricting ourselves to consider only the vectors traversing the origin of the $p$-dimensional Euclidean space, precisely in the way $\Omega$ is defined. From the probabilistic point of view, however, this restriction does not affect, since there is no loss of generality once we consider random vectors centered around the origin of the space:

Let $\Omega'$ be the space of all the continuous random variables in $\mathbb R^p$ with expected value 0 and such that any $p$-vector $\textbf X$ of $p$ of them has a covariance matrix $\Sigma$. Let $\textbf Y$ be the set of principal components of $\textbf X$. Then the $p$ lines partition the space in (almost) equal regions. Consider any continuous random variable $Z$ living in $\mathbb R^p$ such that $\textbf EZ=0$. Let $\Omega$ be the space of unidimensional supports of the random variables in $\mathbb R^p$.  We have the following result:

\begin{teo}\label{TeoPCs}
    \begin{enumerate}
        \item For $k$ fixed and $i\in\{1,\ldots,p\}$, we have that
        \begin{align*}
            \textbf P\left[\left|\rho_{Y_kZ}\right|\geq\left|\rho_{X_iZ}\right|\right]\geq1/2.
        \end{align*}
        \item For $k$ fixed and $i\in\{1,\ldots,p\}$, we have that
        \begin{align*}
            \textbf P\left[\left|\rho_{Y_kZ}\right|\geq\left|\rho_{X_iZ}\right|\right]\rightarrow\Phi(1)-\Phi(-1)\approx 0.6826.
        \end{align*}
    \end{enumerate}
\end{teo}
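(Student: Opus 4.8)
The plan is to deduce Theorem \ref{TeoPCs} from the already-established geometric statement Theorem \ref{TeoLines} by passing to the Hilbert space $L^2$, where the correlation coefficient of two centered random variables is literally a cosine similarity.

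First I would set up the dictionary. For mean-zero square-integrable random variables $A$ and $B$, $\rho_{AB}$ is exactly the cosine of the angle $\widehat{(A,B)}$ between $A$ and $B$ regarded as vectors of $L^2$, and $|\rho_{AB}|$ is the cosine of $\min\{|\widehat{(A,B)}|,|\pi-\widehat{(A,B)}|\}$ --- the same folding onto $[0,\pi/2]$ used to define $\phi_i$ and $\varphi_i$. Since $\textbf E\textbf X=0$ and (with $\Sigma$ nonsingular, as the setup presumes) the coordinates $X_1,\dots,X_p$ span a $p$-dimensional subspace $V\subseteq L^2$; the principal components $Y_1,\dots,Y_p$ are pairwise uncorrelated, hence pairwise orthogonal in $L^2$, and, the eigenvector matrix of $\Sigma$ being invertible, they span the same $V$. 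Thus inside $V\cong\mathbb R^p$ the family $(Y_1,\dots,Y_p)$ is an orthogonal basis (the role of $(u_1,\dots,u_p)$) while $(X_1,\dots,X_p)$ is a basis with a.s.\ no two of its vectors orthogonal (the role of $(v_1,\dots,v_p)$).

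Next I would reduce the target event. Because $\langle Y_k,Z\rangle_{L^2}=\langle Y_k,P_VZ\rangle_{L^2}$ for $P_V$ the orthogonal projection onto $V$ (and likewise for $X_i$), and because the common factor $1/\|Z\|_{L^2}$ cancels in the inequality, the event $\{|\rho_{Y_kZ}|\ge|\rho_{X_iZ}|\}$ equals $\{|\langle\widehat Y_k,P_VZ\rangle|\ge|\langle\widehat X_i,P_VZ\rangle|\}$, hats denoting unit normalization; hence it depends on $Z$ only through the line spanned by $P_VZ$ in $V$ (the null case $P_VZ=0$ has probability zero under the uniform law on $\Omega$). Identifying $Z$ with that line and taking $w$ to be it, $u_k=Y_k$, $v_i=X_i$, the event becomes precisely the event $G^+_{ik}$ of the previous section. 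Under the uniform law on $\Omega$ the orthant of $w$ relative to the $Y$-basis is uniform over the $2^{p-1}$ orthants, so the number $j$ of sign disagreements between $\mathcal O_w$ and $\mathcal O_{X_i}$ is still $\mathrm{Bin}(p,1/2)$; consequently the conditional computations \eqref{cosw}--\eqref{dif11} and the identity \eqref{ProbG} transfer verbatim with $Y_k,X_i$ in place of $u_k,v_i$. Part 1 then follows from Proposition \ref{CondExp}, and Part 2 from the central-limit argument in the proof of Theorem \ref{TeoLines}, namely $\textbf P[\mathrm{Bin}(p,1/2)<(p-p^{1/2})/2]\to\Phi(-1)$ together with the symmetry of the row $\binom p0,\dots,\binom pp$.

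The main obstacle will be keeping this correspondence honest at two points. One is that $Z$ need not lie in $V$; this is exactly what the projection step above handles, and it works only because $\|Z\|$ cancels, so that solely the direction of $P_VZ$ matters. The other, more delicate, is that the $X_i$ are not uniformly distributed the way the $v_i$ of Theorem \ref{TeoLines} are, so one must check that nothing beyond the orthant $\mathcal O_{X_i}$ (equivalently, the sign pattern of $X_i$ in principal-component coordinates) enters the computation behind \eqref{dif11}. Since that computation is carried out at the level of the orthant-center configurations --- the conditional means $\textbf E[\cos\phi_k-\cos\varphi_i\mid E_w,E_{v_i}]$ --- the fixedness of $\textbf X$ is harmless there and the reduction is immediate; a version phrased in terms of the pointwise event rather than its orthant-center proxy would instead require controlling the conditional distribution, not merely the conditional mean, of $|\langle\widehat Y_k,Z\rangle|-|\langle\widehat X_i,Z\rangle|$ given the two orthants.
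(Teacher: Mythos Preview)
Your proposal is correct and follows essentially the same approach as the paper: reduce the correlation inequality to the angular inequality of Theorem~\ref{TeoLines} via the identification of centered random variables with lines through the origin (the paper phrases this as passing to the one-dimensional ``supports'' $S_{Y_k},S_{X_i},S_Z$; you phrase it, more carefully, via the $L^2$ inner product and the projection $P_VZ$). The paper's own proof is a single sentence invoking Theorem~\ref{TeoLines}, so your projection step and your discussion of the fixedness-of-$\textbf X$ obstacle go well beyond what the paper actually supplies.
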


\begin{proof}
Take $S_{Y_1},\ldots,S_{Y_p}$ to be the (unidimensional) supports of $Y_1.\ldots,Y_p$, respectively. Take $S_{X_1},\ldots,S_{X_p}$ the (unidimensional) supports of $X_1,\ldots,X_p$, respectively. Take $S_Z$ to be the support of $Z$. Then the result follows from Theorem \ref{TeoLines}.
\end{proof}

Some comments are in order here:
\begin{itemize}
 \item In the context of geometry, Theorem \ref{TeoLines} is true for every set of orthogonal vectors, but their being orthogonal through uniform random selection is not happening a.s. However, even if $v_1, \ldots, v_p$ are arbitrarily selected to be orthogonal, then it can be seen that they explain the space as good as $u_1, \ldots, u_p$, but no better.
 \item Because of the former point, the uncorrelated random variables do not need to be the set of principal components. However, the condition of knowing the covariance matrix $\Sigma$ allows us to identify a particular set of mutually orthogonal random variables for which we know explicitly the joint distribution and the marginals.
 \item When it comes to PCA we are not doing two things: First, we are not using the variance of the PC in any sense in this proof, except to find the directions of the components themselves. To be sure, in \cite{ArtemiouLi2009}, the authors used the variance of the principal components to prove that it is more probable that the response $Z$ is closer to the principal component $Y_i$ than to the principal component $Y_j$, for $i<j$.
 \item Second, we are not projecting the space to any lower dimension when using principal components. As it is known, reducing the space in contexts of regression might be problematic (see for instance \cite{Joliffe1982} and \cite{HadiLing1998}). But Theorem \ref{TeoLines} is true for all the $p$ PC in the space.
 \item Remarks \ref{R8} and \ref{R9} also admit natural extensions to this scenario of random variables and principal components. So we get, for instance, that the probability of \emph{all} principal components explaining better the response is exponentially higher than the probability of \emph{all} the original input explaining it better. Of course, not even principal components are immune to the curse of dimensionality, but it shows that it affects the PC explanatory power at a lower speed.
\end{itemize}

\section{Discussion}
Principal components analysis (PCA) is a widely used technique but some mystery has remained as to why it's a reasonable thing to do when modeling a response-predictor relationship since it was introduced in \cite{Hotelling1957} and \cite{Kendall1957} about 60 years ago.  We offer a partial answer to this question. A more comprehensive answer will have to look at the probability of the response being closer to at least one of the PC than to all of the original input variables, but this is still an open problem.
\vskip 0.5in
\noindent
{\bf Acknowledgements}:  All authors supported in part by NIH grant NCI R01-CA160593A1. We would like to thank Rob Tibshirani, Steve Marron and Hemant Ishwaran for helpful discussions of the work. DD would like to thank Juan Saenz for his suggestions on how to prove the results and Federico Ardila for an early very useful comment on Proposition \ref{CondExp}.

\end{document}